\newcommand\PLSub{\scaleobj{.8}{\stackrel {\Small \rm PL}{\subset}}}
\newtheorem{lemma}{Lemma}[section]
\newtheorem{proposition}[lemma]{Proposition}
\theoremstyle{definition}
\theoremstyle{remark}
\newtheorem{remark}[lemma]{Remark}
\numberwithin{equation}{section}
\theoremstyle{plain}
\newtheorem{thm}{Theorem}
\theoremstyle{definition}
\theoremstyle{definition}
\newtheorem*{defin*}{Definition}
\theoremstyle{plain}
\begin{document}

\title{Spineless $5$-manifolds and the deformation conjecture}

\author{Michael Freedman}
\address{\hskip-\parindent
  Michael Freedman, 
CMSA, Harvard University, Cambridge, MA 02138
\newline
    Department of Mathematics\\
    University of California, Santa Barbara\\
    Santa Barbara, CA 93106}
\email{mikehartleyfreedman@outlook.com}

\author{Vyacheslav Krushkal}
\address{\hskip-\parindent
  Slava Krushkal,
    Department of Mathematics\\
    University of Virginia\\
    Charlottesville, VA 22904}
\email{krushkal@virginia.edu}

\author{Tye Lidman}
\address{\hskip-\parindent
  Tye Lidman,
  Department of Mathematics\\
  North Carolina State University\\
  Raleigh, NC 27695}
\email{tlid@math.ncsu.edu}

\begin{abstract} We construct a compact PL $5$-manifold $M$ (with boundary) which is homotopy equivalent to the wedge of eleven $2$-spheres, $\vee^{}_{1\! 1}S^2$,
which is ``spineless'', meaning $M$ is not the regular neighborhood of any $2$-complex PL embedded in $M$.
We formulate a related question about the existence of exotic smooth structures on $4$-manifolds which is of interest in relation to the deformation conjecture for $2$-complexes, also known as the generalized Andrews-Curtis conjecture. 
\end{abstract}

\maketitle

\section{Introduction}

The purpose of this paper is to give an application of the existence of exotic smooth structures on $4$-manifolds to a question about spines in classical PL topology, and to propose an approach to the deformation conjecture for $2$-complexes (or equivalently, group presentations). To state the results, we start by recalling some facts and questions about PL manifolds and $2$-complexes. 

\subsection{Spines} Since the discussion of spines mixes simplicial\footnote{There is no loss of generality in assuming all cell complexes we encounter to be simplicial, and we make this assumption in this subsection.} complexes and manifolds, the most convenient category for our manifolds is PL. Our focus will be on $5$-manifolds, a dimension where every PL manifold has a unique smoothing (since ${\rm PL}/{\rm O}$ is $6$-connected), so it is harmless for the reader to think in the smooth category.

If $M$ is a PL manifold with boundary which admits a {\em collapse} to a complex $K\subset {\rm int}(M)$ then $K$ is a {\em spine} of $M$. A ``collapse'' means a sequence of elementary collapses, see \cite{Cohen}. Equivalently, $K\subset M$ is a spine of $M$ if and only if $M$ is a regular neighborhood of $K$.
We will only be concerned with the case ${\rm dim}(K)=k$ and ${\rm dim}(M)=2k+1$. With this restriction in mind, we say a manifold with boundary $M^{2k+1}$ is {\em spineless} if and only if there is no spine $K^k\subset M^{2k+1}$. Contrariwise, we simply say $M^{2k+1}$ has a {\em spine} if there exists a $k$-dimensional spine\footnote{Note all manifolds $M^n$ with boundary have a spine of dimension $n-1$, by collapsing top simplices. Finding lower dimensional spines takes more work.} $K\subset M^{2k+1}$. 
Our main result is the following theorem.

\begin{thm} \label{spineless M}
There exists a $5$-manifold $M$,
(simple) homotopy equivalent to $\vee^{}_{1\!1}S^2$, which does not have a spine.
\end{thm}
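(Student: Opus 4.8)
The plan is to build $M$ as the boundary connected sum (or some handlebody-theoretic plumbing) of a carefully chosen simply-connected $4$-manifold piece and a collar, so that $M$ is a $5$-dimensional handlebody with only $0$- and $2$-handles realizing the homotopy type $\vee_{11}S^2$, and then to obstruct the existence of a $2$-dimensional spine by a smooth $4$-manifold argument on the boundary. Concretely, if $K^2 \subset \operatorname{int}(M^5)$ were a spine, then $M$ is a regular neighborhood of $K$, so $\partial M$ is the boundary of a regular neighborhood of a $2$-complex in a $5$-manifold; such boundaries are constrained. The key point will be that $\partial M$ can be computed from our construction to be a specific smooth $4$-manifold $X$ which is homeomorphic but \emph{not diffeomorphic} to a ``standard'' model $X_0$ (the boundary one would get from the obvious handle structure), and the existence of a $2$-spine would force $\partial M$ to be diffeomorphic to something in a restricted class (e.g. obtained by surgery on a framed link in $S^4$, or a connected sum of standard pieces), contradicting the exotic smooth structure. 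So the logical skeleton is: (i) construct $M$ and verify the (simple) homotopy equivalence with $\vee_{11}S^2$ by exhibiting an explicit handle decomposition; (ii) identify $\partial M$ with a known exotic $4$-manifold; (iii) show any hypothetical $2$-spine $K \subset M$ would yield a handle decomposition of $M$ with one $0$-handle and $2$-handles only, hence a presentation of $\partial M$ as surgery on a framed link in $S^4$ (equivalently, as the boundary of a $1$-connected $5$-dimensional $2$-handlebody built on $D^5$); (iv) invoke a smooth $4$-manifold invariant (Seiberg--Witten / Donaldson, or an adjunction-type inequality, or the Heegaard Floer $d$-invariant) to show $\partial M$ admits no such description, a contradiction.

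First I would set up the handle picture. Take a $1$-connected smooth closed $4$-manifold $N$ containing a smoothly embedded surface (or a configuration of surfaces) whose exotic behavior is detected by gauge theory — the running example in the literature is something like an exotic $\#^k \mathbb{CP}^2 \# \overline{\mathbb{CP}}^2$ or a knot-surgered manifold — and then let $M$ be obtained from $N \times [0,1]$ (or from a $4$-manifold-with-boundary $W$) by attaching handles / removing a neighborhood so that $M$ deformation retracts to $\vee_{11}S^2$. The simplicity of the homotopy equivalence is checked by writing down the handle chain complex: with a single $0$-handle and eleven $2$-handles attached along a nullhomotopic framed link, the Whitehead torsion vanishes automatically (the chain complex is $\Z^{11}$ in degree $2$, zero elsewhere), giving a \emph{simple} homotopy equivalence to $\vee_{11}S^2$. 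The number $11$ presumably is whatever the construction forces (e.g. $b_2$ of the auxiliary $4$-manifold plus correction terms); I would not fight it.

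The crux — and the main obstacle — is step (iii)–(iv): translating ``$M$ has a $2$-spine'' into a statement that gauge theory can refute. A regular neighborhood of a $2$-complex $K$ in a smooth $5$-manifold collapses to $K$, so $M$ has a handle decomposition with handles of index $\le 2$; turning it upside down, $\partial M$ bounds (inside $M$) a cobordism to $\emptyset$ built with handles of index $\ge 3$, i.e. $\partial M = \partial(\text{$2$-handlebody on }D^5)$, which means $\partial M$ is obtained from $S^4$ by surgery on a framed link $L$ (the attaching $2$-spheres of the $2$-handles, with framings). Now a $4$-manifold obtained by surgery on a link in $S^4$ has very restricted smooth topology — in particular it is ``standard'' in the sense detectable by Seiberg--Witten invariants being trivial, or by the intersection form plus Rokhlin/Furuta constraints, or (if $\partial M$ is a homology sphere or has simple homology) by the Heegaard Floer correction terms being those of a connected sum of standard pieces. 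If our $\partial M$ is arranged to have a nontrivial Seiberg--Witten invariant (e.g. by taking the boundary of a neighborhood of a symplectic-type surface configuration, Fintushel--Stern style), then it cannot be surgery on a link in $S^4$, and no $2$-spine can exist. The delicate part is making all three things simultaneously true: $M$ $\simeq \vee_{11}S^2$ (a strong homotopy constraint, forcing $H_*(\partial M)$ to match that of $\#_{11}(S^2\times S^2)$ or $\partial(\natural_{11} S^2\times D^3)$), $\partial M$ genuinely exotic/gauge-nontrivial, and the obstruction argument airtight; I expect the real work is a careful Mayer--Vietoris / handle-slide bookkeeping to pin down $\partial M$ exactly, followed by citing the appropriate exotic-$4$-manifold input, and that is where I would expect the proof in the paper to spend most of its effort.
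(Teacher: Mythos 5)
Your high-level plan matches the paper's strategy: arrange for $\partial M$ to be an exotic $4$-manifold with nontrivial Seiberg--Witten invariants, and then show that a $2$-spine in $M$ would force the Seiberg--Witten invariants of $\partial M$ to vanish. The paper realizes the construction cleanly by taking $\mathrm{BH}$, an exotic $\#_{11}(S^2\times S^2)$ of Baykur--Hamada, forming the (smooth, hence PL) $h$-cobordism $W$ from the standard $\#_{11}(S^2\times S^2)$ to $\mathrm{BH}$, and setting $M := (\natural_{11}\, S^2\times D^3)\cup_{\mathrm{id}} W$; this gives $M\simeq\vee_{11}S^2$ with $\partial M=\mathrm{BH}$ for free. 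Your description of the construction (attaching handles to $N\times[0,1]$, etc.) is much vaguer, but that is a difference in execution rather than in idea.

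The genuine gap is in your step (iii), where you assert that a $2$-spine $K\subset M$ yields ``a handle decomposition of $M$ with one $0$-handle and $2$-handles only,'' and hence that $\partial M$ is surgery on a framed link in $S^4$. This is not what a $2$-spine gives you. A regular neighborhood of a $2$-complex gives a handle decomposition with $0$-, $1$-, \emph{and} $2$-handles (one per cell after collapsing a maximal tree), and you cannot assume the candidate spine has no $1$-cells: a $2$-complex simple-homotopy equivalent to $\vee_{11}S^2$ may well have $1$-cells, and whether they can be eliminated by $3$-deformation is precisely the content of the deformation conjecture, which is open. Consequently, the correct statement is that $\partial M$ is obtained from $\#_g(S^1\times S^3)$ by surgery on $n$ loops; these loops can wind nontrivially through the $S^1\times S^3$ summands, so $\partial M$ is \emph{not} automatically a connected sum of $S^2\times S^2$'s and $S^2\,\widetilde\times\, S^2$'s, and your appeal to ``surgery on a link in $S^4$ has trivial SW'' does not apply. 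The paper closes exactly this gap in Proposition~\ref{prop:vanishing}: starting from $\#_g(S^1\times S^3)$, it reorders the attaching circles so that $\gamma_1,\dots,\gamma_k$ span the image in $H_1(\cdot;\mathbb{Q})$, performs those surgeries first, and observes (via Lemma~\ref{lem:surgery}) that each subsequent surgery is on a rationally nullhomologous curve and therefore creates a rationally essential embedded square-zero $2$-sphere (the belt sphere of the corresponding $2$-handle cobordism). The hypothesis $b^+(\partial X)>1$ forces at least one such extra surgery to occur, and then Fintushel--Stern's lemma \cite{FSImmersed} on rationally essential square-zero spheres kills the Seiberg--Witten invariants. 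Without this bookkeeping and the specific Fintushel--Stern input, your step (iv) is not a proof; the various alternatives you float (Rokhlin/Furuta, $d$-invariants) do not obviously handle the $\#_g(S^1\times S^3)$-surgery case either.
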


\begin{remark}
The notion of a spine considered here is the classical one used in PL topology. A different meaning of the term ``spine'' has also been used in the literature, referring to a PL embedding $L\subset M$ which induces a homotopy equivalence,  where $L$ is a closed PL manifold. 
Examples of manifolds of even dimensions $\geq 6$ that do not admit a codimension two spine in this sense were given in \cite{CS}, and in dimension four they are due to \cite{Matsumoto, LL}.
Note that a homotopy equivalence $K^2\to M^5$ may be assumed to be a PL embedding by general position.
\end{remark}

\begin{remark}
The manifold $M$ relies on the existence of an exotic $\#_{1\!1} (S^2 \times S^2)$ with non-vanishing Seiberg-Witten invariants, established by Baykur-Hamada in \cite{BH}.  This will be the boundary of $M$.  Since they construct  infinitely many in this TOP homeomorphism class,  one can obtain infinitely many homotopy equivalent spineless manifolds which are not PL homeomorphic as their boundaries differ.  In general, our arguments show that any simply-connected 4-manifold with vanishing signature, non-vanishing Seiberg-Witten invariants, and $b^+ > 1$ leads to a spineless 5-manifold.  Hence, we could use earlier constructions of exotica, such as those in \cite{Park}, or other examples from \cite{BH}, to produce spineless 5-manifolds with larger $b_2$.  We chose to focus on the $\#_{1\!1} (S^2 \times S^2)$ from \cite{BH} for concreteness since they are currently the smallest known spin examples.       
\end{remark}

Complementary to Theorem~\ref{spineless M}, we also establish the following result, concerning spines in other dimensions, which is likely well-known.

\begin{thm} \label{thm: k not 2}
Let $M^{2k+1}$ be a PL manifold simple homotopy equivalent to a $k$-complex $K$, then, if $k\neq 2$, $M$ has $K$ as a spine.
\end{thm}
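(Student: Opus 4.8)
The plan is to handle the small cases $k\le 1$ by hand and the remaining case $k\ge 3$ by embedding $K$ as a candidate spine and identifying the complement of its regular neighborhood as an $s$-cobordism. One may assume $K$, and hence $M$, connected (arguing componentwise otherwise), and $\partial M\ne\emptyset$, since $M\simeq K$ has the homotopy type of a complex of dimension $<2k+1$. When $k\le 1$ the group $\pi_1M$ is trivial or free, so $\mathrm{Wh}(\pi_1M)=0$ and the word ``simple'' is no restriction. For $k=0$ each component of $M^1$ is an arc (a circle is not homotopy equivalent to a $0$-complex), and an arc collapses to an interior point; choosing one such point per component realizes $K$ as a spine. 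For $k=1$, $M^3$ is homotopy equivalent to a graph, hence to a $K(F_r,1)$, hence aspherical, hence irreducible with nonempty boundary, so by the loop theorem $M$ is a handlebody (a $3$-ball when $\pi_1M$ is trivial); and it is classical that a handlebody has every homotopy-equivalent finite graph as a spine: embed the graph in the interior by general position inducing the homotopy equivalence and observe that the complement of a regular neighborhood is a product.

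For the main case $k\ge 3$, write $n:=2k+1\ge 7$. First I would apply general position --- legitimate since $2\dim K\le 2k<n$ --- to homotope the given simple homotopy equivalence to a PL embedding $K\hookrightarrow\operatorname{int}(M)$ that is still a homotopy equivalence; let $N=N(K)$ be a regular neighborhood, so $N\searrow K$, and $W=\overline{M\setminus N}$, a compact cobordism with $\partial W=\partial N\sqcup\partial M$. Next I would check that $W$ is an $h$-cobordism. Since $\operatorname{codim}K=k+1\ge 4$, general position identifies $\pi_1(W)\cong\pi_1(M\setminus K)\cong\pi_1(M)$, and (using $k\ge 2$) $\pi_1(\partial N)\cong\pi_1(N)\cong\pi_1(M)$; call this common group $\pi$. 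Lifting to universal covers and excising, $H_*(W,\partial N;\mathbb{Z}[\pi])\cong H_*(M,N;\mathbb{Z}[\pi])=0$, so the relative Hurewicz theorem makes $\partial N\hookrightarrow W$ a homotopy equivalence; Poincar\'e--Lefschetz duality for $W$ then gives $H_*(W,\partial M;\mathbb{Z}[\pi])=0$, and a short chase of the long exact sequence of the pair shows that the cover of $\partial M$ induced from the universal cover of $W$ is simply connected, whence $\partial M\hookrightarrow W$ is a homotopy equivalence as well.

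It then remains to compute the torsion. As $N\searrow K$, the inclusion $K\hookrightarrow N$ is a simple homotopy equivalence; composing with the given simple homotopy equivalence $K\to M$ shows $N\hookrightarrow M$ is simple, i.e.\ $\tau(N\hookrightarrow M)=0$ in $\mathrm{Wh}(\pi)$, and by the excision (sum) formula for Whitehead torsion applied to $M=N\cup_{\partial N}W$ this forces $\tau(\partial N\hookrightarrow W)=0$. Thus $W$ is an $s$-cobordism, and since $\dim W=n\ge 6$ the PL $s$-cobordism theorem (see \cite{Cohen}) yields a PL homeomorphism $W\cong\partial N\times[0,1]$ rel $\partial N$. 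Hence $M=N\cup_{\partial N}W$ is $N$ with an external collar attached, so there is a PL homeomorphism $M\to N$ that is the identity near $K$; since $N\searrow K$ and collapsibility is invariant under PL homeomorphism, $M\searrow K$, that is, $K$ is a spine of $M$.

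The step I expect to be the main obstacle is the verification that $W$ is an honest $h$-cobordism, in particular the behavior at the $\partial M$ end and the accompanying $\pi_1$-bookkeeping; everything after that is formal. It bears emphasizing that, away from the trivial cases $k\le 1$, the hypothesis $k\ne 2$ is used only to guarantee $\dim W\ge 6$ for the $s$-cobordism theorem, and this is exactly what breaks at $k=2$ (ambient dimension $5$), as Theorem~\ref{spineless M} shows.
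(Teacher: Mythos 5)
Your overall strategy for $k\ge 3$ — embed $K$ by general position, form the complementary cobordism $W$ between $\partial N(K)$ and $\partial M$, check $W$ is an $h$-cobordism, compute torsion and invoke the PL $s$-cobordism theorem — is the same as the paper's, and the torsion bookkeeping via the composition and sum formulas matches. However there is a genuine gap at the $\partial M$ end of the $h$-cobordism check.

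You assert that the vanishing $H_*(W,\partial M;\mathbb{Z}[\pi])=0$ together with ``a short chase of the long exact sequence of the pair'' shows that the cover of $\partial M$ pulled back from the universal cover of $W$ is simply connected. The chase does not give this. Writing $K_0=\ker(\pi_1(\partial M)\to\pi)$, the cover in question has $\pi_1\cong K_0$. From $H_0(W,\partial M;\mathbb{Z}[\pi])=0$ one correctly gets that $\pi_1(\partial M)\to\pi$ is onto, and from $H_1(W,\partial M;\mathbb{Z}[\pi])=0$ and $H_1(W;\mathbb{Z}[\pi])=H_1(\widetilde W)=0$ one gets $H_1$ of the cover of $\partial M$ vanishes, i.e.\ $K_0$ is \emph{perfect}. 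Nothing in the sequence forces $K_0$ to be trivial: a $\mathbb{Z}[\pi]$-homology equivalence that is $\pi_1$-surjective can a priori have nontrivial perfect $\pi_1$-kernel. Duality plus Hurewicz upgrade a $\pi_1$-isomorphism to a homotopy equivalence; they do not deliver the $\pi_1$-isomorphism at the $\partial M$ end. Some additional geometric input is required to see that $\pi_1(\partial M)\hookrightarrow\pi_1(M)$. The paper supplies this by a separate argument: it appeals to a collapse of $M$ to $N(K)$ and tracks the fundamental group of the moving frontier during the collapse to conclude that $\pi_1(\partial_1 C)$ and $\pi_1(\partial_0 C)$ have the same image in $\pi_1(M)$, which combined with the injectivity already established for $\partial_0 C$ closes the loop. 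Your write-up omits this (or any replacement for it), so as it stands $W$ has not been shown to be an $h$-cobordism.

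Two smaller remarks. In the $k=1$ case you pass from ``aspherical'' to ``irreducible'' without comment; that step silently uses the Poincar\'e conjecture to rule out fake $3$-cells, which the paper makes explicit by citing Perelman. And your observation that $\operatorname{codim}K=k+1\ge 4$ overshoots slightly: codimension $\ge 3$ is what the general-position arguments for $\pi_1(\partial N)\cong\pi_1(N)$ and $\pi_1(M\setminus K)\cong\pi_1(M)$ actually need, and it is precisely what fails to be enough to run the $s$-cobordism machine when $k=2$.
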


\subsection{Deformations of \texorpdfstring{$2$}{2}-complexes} \label{sec:2complexes}
Although in the previous subsection we thought of our complexes as simplicial so as to discuss PL embeddings in PL manifolds, it is sometimes more convenient in this subsection  to think of CW $2$-complexes to match group theory: generators and relations. Since we will freely allow low dimensional deformations, $2$-complexes may be thought of as group presentations: collapse a maximal tree in the $1$-skeleton to see a wedge of circles with $2$-cells attached. The Andrews-Curtis conjecture (see \cite{KK} and references therein) is the most famous open problem about group presentations. In geometric language it asks if the $2$-complex associated with a balanced presentation of the trivial group can always be {\em $3$-deformed} to the empty presentation, i.e. a point.

A {\em $3$-deformation}\footnote{$3$-deformation can be defined group theoretically. See \cite[Section 2]{KK} and references therein for a precise definition.} is, according to the usage in simple homotopy theory, a deformation between (in our case) $2$-complexes involving no expansion beyond $3$-cells. 

By the {\em deformation conjecture} we mean the generalization of the Andrews-Curtis conjecture, stating that any two simple homotopy equivalent $2$-complexes are related by a $3$-deformation. We refer the reader to \cite{HM} for a survey of the generalized Andrews-Curtis conjecture; it was termed the `deformation conjecture' in \cite{Quinn}. 

\subsection{Exotic smooth structures on $4$-manifolds and the deformation conjecture} \label{sec: discussion}

The following statement follows from facts about deformations of spines of $5$-manifolds, see Section \ref{sec: Deformations of spines}.

\begin{proposition} \label{prop: 5 manifolds}
If the deformation
conjecture is true, then any two $5$-manifolds $M_1, M_2$ which are simple-homotopy equivalent and contain spines are PL isomorphic. In particular, their $4$-manifold boundaries $\partial M_1, \partial M_2$ are diffeomorphic.
\end{proposition}

The proof of Theorem \ref{spineless M}, given in Section \ref{sec: 5manifold}, relies on the fact that if a $5$-manifold $M$ has a handle decomposition with all handles of indices $\leq 2$ and $b^+(\partial M)>1$ then $\partial M$ contains an embedded homologically essential square zero $2$-sphere and thus its Seiberg-Witten invariants vanish. 
On the other hand, there are $4$-manifold invariants, cf. \cite{LLP}, which can distinguish homeomorphic smooth 4-manifolds containing square zero $2$-spheres.  However, we are not aware of instances of this where the exotic pairs bound any 5-manifolds.  
With this in mind, we formulate the following question, which in fact was the original  motivation for this paper.

{\bf Question}. {\em
Do there exist exotic pairs of $4$-manifolds $N_1, N_2$ such that $N_i=\partial M_i$, where $M_1, M_2$ are simple-homotopy equivalent $5$-manifolds admitting handle structures with all handles of indices $\leq 2$? 
}

By the discussion in Section \ref{sec: Deformations of spines}, the condition above on the handle decompositions is equivalent to the requirement that $M_1, M_2$ have $2$-spines. By Proposition \ref{prop: 5 manifolds}, the affirmative answer to the question would give a counterexample to the deformation conjecture.

\section{On the existence of spines in  \texorpdfstring{$5$}{5}-manifolds} \label{sec: 5manifold}

We start this section by recording some statements about spines of $5$-manifolds and their $3$-deformations. The proof of Theorem \ref{spineless M} is given in Section \ref{sec: the manifold}.

\subsection{Deformations of spines} \label{sec: Deformations of spines}
The following two facts are used in the proofs of Theorem \ref{spineless M} and of Proposition \ref{prop: 5 manifolds} respectively.

\noindent
{\bf Facts}:\\
(1)  \parbox[t]{14.5cm}{ A 5-manifold
$M$ admits a spine if and only if it it has a handle decomposition with only 0-, 1-, and 2-handles}

(2) \parbox[t]{14.5cm}{  If $K_1$ $3$-deforms to $K_2$ and $K_1\subset M$ is a spine, then there exists a PL embedding of $K_2$ in $M$ so that $K_2$ is also a spine of $M$.}

For $2$-complexes in a $5$-manifold there are no flatness issues so $0$-, $1$- and $2$-cells may be thickened to $5$-dimensional $0$-, $1$- and $2$-handles. Indeed, the local link models to flatten are $S^1 \, \PLSub \, S^4$ (for interior points) and $([0,1], \{ 0, 1 \}) \, \PLSub \, (B^4, \partial)$ (for boundary points). Both are PL unknotted.
The second fact is derived using the observation that an expansion followed by a collapse of a $3$-cell can be seen as sliding one $2$-cell over other $2$-cells. 
Sliding a $2$-handle over another $2$-handle requires the ability to take parallel copies of $2$-handles and to connect these by disjoint bands. 
Once an isotopy is built, PL ambient isotopy \cite{Hudson} applies slide-by-slide to build an ambient isotopy taking $N(K_1)$ to $N(K_2)$.

Proposition \ref{prop: 5 manifolds} follows from (2).
Indeed, the deformation conjecture, if true, would imply that 2-spines of $M_1, M_2$ are related by a 3-deformation, which translates to a sequence of handle slides, showing that the boundaries  $N_1, N_2$ are diffeomorphic.

\subsection{The \texorpdfstring{$5$}{5}-manifold \texorpdfstring{$M$}{M}}
\label{sec: the manifold}

It goes back to Wall \cite{Wall} that any two simply connected $4$-manifolds with the same homotopy type are (smoothly) $h$-cobordant. We will use this fact, but in the PL category. Let BH denote one of the exotic $\#_{1\!1} (S^2 \times S^2)$ constructed by Baykur-Hamada in \cite{BH} mentioned in the introduction.  Let $\#^{}_{ 1\!1} (S^2\times S^2)$ denote the same manifold but with the standard smooth structure. Further, let $(W; \, \#^{}_{ 1\!1} (S^2\times S^2), \, {\rm BH})$ denote the $h$-cobordism between the two (it happens to be unique). Now define 
\[ M^5:= (\natural^{}_{1\! 1} S^2\times D^3 )\cup^{}_{{\rm id\; on} \; \partial} W.\]
Clearly, $M$ is homotopy equivalent to $\vee^{}_{1\! 1} S^2\simeq \natural^{}_{1\!1} (S^2\times D^3)$.

\begin{thm}\label{thm:spineless}
The manifold $M$ cannot be built using only 0-, 1-, and 2-handles.
\end{thm}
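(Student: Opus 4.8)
The plan is to show that a handle decomposition of $M$ using only $0$-, $1$-, and $2$-handles would force $\partial M$ --- which by construction is the Baykur--Hamada manifold, call it ${\rm BH}$ --- to admit a metric of positive scalar curvature, contradicting the non-vanishing of the Seiberg--Witten invariants of ${\rm BH}$ recalled in the Introduction.

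So suppose $M$ is built from handles of index at most $2$. Since $M$ is connected, we may assume it has a single $0$-handle, and then the union $N$ of the $0$- and $1$-handles is diffeomorphic to $\natural_g(S^1\times D^4)$ for some $g\ge 0$, so $\partial N\cong \#_g(S^1\times S^3)$. The manifold $M$ is obtained from $N$ by attaching the $2$-handles, each along a framed embedded circle in the current boundary. Attaching a $5$-dimensional $2$-handle $D^2\times D^3$ along a framed circle $S^1\times D^3$ modifies the boundary by surgery on that circle; since a circle in a $4$-manifold has codimension $3$, each of these is a surgery of codimension $3$. Therefore $\partial M={\rm BH}$ is obtained from $\#_g(S^1\times S^3)$ by a sequence of codimension-$3$ surgeries.

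Now I would invoke the positive-scalar-curvature surgery theory of Gromov--Lawson and Schoen--Yau. The product $S^1\times S^3$ carries a positive scalar curvature metric (the round metric on $S^3$ has positive scalar curvature, and it stays positive in the Riemannian product with a flat circle), and positive scalar curvature is preserved under connected sum in dimensions $\ge 3$, so $\#_g(S^1\times S^3)$ admits such a metric. The Gromov--Lawson construction further shows that positive scalar curvature is preserved under surgery in codimension $\ge 3$. Applying this along the codimension-$3$ surgeries found above, we conclude that ${\rm BH}$ admits a metric of positive scalar curvature. But ${\rm BH}$ has $b^+=11>1$, so by Witten's vanishing theorem all of its Seiberg--Witten invariants vanish --- contradicting the Baykur--Hamada computation. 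Hence $M$ has no handle decomposition with handles of index at most $2$.

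I expect the only genuinely delicate point to be the reduction in the second paragraph: one must see clearly that building $M$ out of $0$-, $1$-, and $2$-handles exhibits $\partial M$ as the result of codimension-$3$ (i.e.\ circle) surgeries on $\#_g(S^1\times S^3)$ --- equivalently, reading the dual decomposition, that surgering out the belt $2$-spheres of the $2$-handles turns $\partial M$ into $\#_g(S^1\times S^3)$, because the complementary handles assemble into a $5$-dimensional $1$-handlebody. Everything after that is two standard theorems. Finally, note that this argument uses only properties of the boundary, so it proves more generally that the analogous $5$-manifold built from any simply connected $4$-manifold of signature zero with $b^+>1$ and non-vanishing Seiberg--Witten invariants admits no handle decomposition with handles of index at most $2$, as stated in the remarks.
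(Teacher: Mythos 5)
Your proof is correct, and it takes a genuinely different route from the paper. Both arguments share the same starting reduction: since $M$ is connected we may take a single $0$-handle, the $0$- and $1$-handles assemble to $\natural_g(S^1\times D^4)$, and attaching $5$-dimensional $2$-handles performs codimension-$3$ surgeries on circles in the boundary, so that $\partial M$ arises from $\#_g(S^1\times S^3)$ by a sequence of such surgeries. From there the two arguments diverge. The paper tracks rational homology through the surgeries (its Lemma~\ref{lem:surgery}): after reordering, the surgeries on the first $k$ curves kill a $k$-dimensional subspace of $H_1$ without changing $b_2$, while each later surgery on a rationally nullhomologous curve raises $b_2$ by $2$ and produces a rationally essential embedded square-zero $2$-sphere (the belt sphere). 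Since $b^+(\partial M)>1$ there must be at least one such surgery, and Fintushel--Stern's result that a closed $4$-manifold with $b^+>1$ containing a rationally essential square-zero sphere has vanishing Seiberg--Witten invariants then gives the contradiction. You instead appeal to the Gromov--Lawson/Schoen--Yau surgery theorem for positive scalar curvature: since $\#_g(S^1\times S^3)$ has psc and codimension-$\geq 3$ surgery preserves psc, $\partial M$ would admit a psc metric, and Witten's vanishing theorem for $b^+>1$ gives the contradiction. Both arguments are valid and yield the same generalization to any simply connected signature-zero boundary with $b^+>1$ and nonzero Seiberg--Witten invariants. The paper's approach has the virtue of pinpointing a concrete geometric obstruction (an essential square-zero sphere), which is also why the paper's discussion in Section~\ref{sec: discussion} frames the open question around $4$-manifolds containing such spheres; your psc route is cleaner to state but imports heavier Riemannian-geometric machinery and does not by itself exhibit the sphere.
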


By the discussion in Section \ref{sec: Deformations of spines}, Theorem \ref{thm:spineless} implies Theorem \ref{spineless M} in the introduction.

To prove Theorem \ref{thm:spineless}, since the BH manifolds are simply-connected with $b^+ = 11$ and have non-vanishing Seiberg-Witten invariants, it suffices to establish the following proposition.  
\begin{proposition}\label{prop:vanishing}
Let $X$ be a compact, connected, oriented smooth 5-manifold with $b^+(\partial X) > 1$.  If $X$ can be built from only 0-,1-, and 2-handles, then the Seiberg-Witten invariants of $\partial X$ vanish.  
\end{proposition}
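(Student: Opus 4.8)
The plan is to analyze the $4$-manifold $Y := \partial X$ by understanding the cobordism it bounds when $X$ is built from handles of index $\leq 2$. Turning the handle decomposition of $X$ upside down, $X$ is built from $\partial X = Y$ by attaching handles of index $\geq 3$, i.e. handles of index $3$, $4$, and $5$. Reading from the other end, the empty set (or a collection of $5$-balls, one per $0$-handle) is obtained from $Y$ by first attaching $3$-handles, then $4$-handles, then $5$-handles. Dually, $Y$ is obtained from $\emptyset$ (a union of $B^5$'s) by attaching $5-i$ handles as $i$ runs over $0,1,2$; in any case the key structural fact I would extract is that $Y$ bounds a $5$-manifold $X$ whose handles all have index $0$, $1$, or $2$, so that $\pi_1(X)$ is generated by the $1$-handles and $H_2(X)$ is generated by $2$-handle cores. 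The crucial consequence: the inclusion $Y \hookrightarrow X$ kills nothing in degree $2$ that we can't see, and more usefully, $X$ deformation retracts to a $2$-complex, so $H^3(X) = H^4(X) = H^5(X) = 0$ and $H_3(X; \Z) = H_4(X;\Z) = 0$ (the latter with any coefficients).

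Now I would invoke the standard $10/8$-type or, more simply, the Bauer--Furuta / Seiberg--Witten gluing vanishing principle. The cleanest route: if $Y = \partial X$ with $b^+(Y) > 1$ and $X$ admits a handle decomposition with handles of index $\leq 2$, then $X$ is obtained from a union of $5$-balls by attaching $1$- and $2$-handles; turning this around, $-X$ (with reversed orientation, viewed as a cobordism from $Y$ to $\emptyset$) is built with handles of index $\geq 3$. By work on Seiberg--Witten invariants and their behavior under cobordisms — specifically, the fact that a negative-definite or more generally an ``index $\geq 3$ handle'' cobordism from $Y$ to a manifold with simpler topology forces $SW_Y = 0$ — one concludes the invariants vanish. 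Concretely, I would use that attaching handles of index $\geq 3$ to $Y$ realizes a homology cobordism-like operation, and the relevant input is that the closed-up manifold $X \cup_Y (-X)$ or an appropriate capping has $b^+ > 1$ on both pieces, contradicting the product formula unless $SW_Y = 0$. Alternatively, and perhaps most efficiently: since $X$ has the homotopy type of a $2$-complex, $H^2(X;\Z) \to H^2(Y;\Z)$ need not be onto, but the image of $H_2(Y) \to H_2(X)$ together with the condition on handles means the intersection form of $Y$ bounds in a strong sense; combined with $b^+(Y) > 1$, the relevant obstruction (an unstable Seiberg--Witten or Bauer--Furuta class) must vanish.

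The step I expect to be the main obstacle is making precise \emph{which} vanishing theorem applies. The issue is that $Y$ has $b^+ > 1$ so ordinary $SW$ invariants are well-defined, but $Y$ need not bound a \emph{positive scalar curvature} metric, so the naive PSC vanishing does not directly apply. The right tool is likely a relative Seiberg--Witten argument: a handle decomposition of $X$ with indices $\leq 2$ gives, dually, a way to cap $-Y$ by handles of index $\geq 3$; attaching a $3$-handle is surgery on a $2$-sphere, a $4$-handle surgery on a $3$-sphere, etc. Each such surgery, under the hypothesis $b^+ > 1$, either preserves or kills $SW$ in a controlled way, and since we end at (a disjoint union of) $S^4$'s whose $SW$ vanishes, running the surgery exact sequences backwards — or more robustly, invoking that $X$ being a ``$2$-handlebody'' makes the double $DX$ a manifold with a metric of positive scalar curvature away from a codimension-$\geq 3$ region — forces $SW_Y = 0$. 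I would need to be careful that the surgery formula for $SW$ along $2$-spheres (as when running the $3$-handle attachments backward) is available with the sign/dimension bookkeeping correct; this, together with the $b^+>1$ hypothesis ensuring no chamber-dependence, is where the real content lies.
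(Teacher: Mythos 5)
Your setup is on the right track---you correctly observe that a handle decomposition of $X$ with indices $\leq 2$ exhibits $\partial X$ as the result of surgering a collection of circles in $\#_g(S^1\times S^3)$ (equivalently, that $X$ retracts to a $2$-complex)---but the proposal never arrives at a usable vanishing theorem, and you say as much yourself. The various candidates you float (PSC vanishing for a double, a Bauer--Furuta gluing argument, a hypothetical surgery exact sequence for $SW$ along the $2$- and $3$-spheres appearing in the dual handles) do not close the gap: there is no off-the-shelf surgery formula for closed $4$-manifold $SW$ invariants along $2$-spheres that ``runs backwards'' the way you describe, and the double $DX$ need not carry PSC, nor would its PSC in a codimension-$\geq 3$ region control $SW_{\partial X}$.

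The missing idea is geometric and specific. Since $\partial X$ is obtained from $\#_g(S^1\times S^3)$ by surgery on circles $\gamma_1,\dots,\gamma_n$, one can reorder so that $\gamma_1,\dots,\gamma_k$ span the same rational span as all the $\gamma_i$ and are independent in $H_1(\#_g(S^1\times S^3);\Q)$. Surgery on an independent circle does not change $b_2$, while surgery on a rationally nullhomologous circle raises $b_2$ by $2$ and---this is the key point---produces a rationally essential embedded sphere of self-intersection zero, namely the belt sphere of the corresponding $5$-dimensional $2$-handle. Counting gives $b_2(\partial X)=2(n-k)$, so $b^+(\partial X)>1$ forces at least one rationally nullhomologous $\gamma_{k+1}$, hence a rationally essential square-zero sphere in $\partial X$. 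Now the correct input is Fintushel--Stern \cite[Lemma 5.1]{FSImmersed}: a $4$-manifold with $b^+>1$ containing a rationally essential embedded square-zero sphere has vanishing Seiberg--Witten invariants. Without identifying the square-zero sphere and invoking that theorem, the argument does not go through.
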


First, we need a standard lemma.  Recall that given an embedded loop $\gamma$ in an oriented 4-manifold $N$, we can perform surgery by removing an $S^1 \times D^3$ and regluing by a $D^2 \times S^2$.  Denote the result by $N_\gamma$.  There are two framing choices here, but the arguments are unaffected by this choice, so we suppress this from the notation.  Note that $H_1(N_\gamma)$ is isomorphic to $H_1(N) / \langle \gamma \rangle$.  
\begin{lemma}\label{lem:surgery}
Let $N$ be a closed, oriented 4-manifold and $\gamma$ an embedded loop.  Let $N_\gamma$ denote the result of surgery on $\gamma$ with some choice of framing.  If $\gamma$ is non-trivial in $H_1(N;\mathbb{Q})$, then $b_2(N_\gamma) = b_2(N)$.  If $\gamma$ is rationally nullhomologous, then $b_2(N_\gamma) = b_2(N) + 2$.  Finally, in the rationally nullhomologous case, $N_\gamma$ has an embedded square zero sphere which is non-zero in $H_2(N_\gamma;\mathbb{Q})$.  
\end{lemma}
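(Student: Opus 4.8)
The plan is to compare $N$ and $N_\gamma$ through the common piece, the exterior $C := N\setminus\mathrm{int}(\nu\gamma)$, where $\nu\gamma\cong S^1\times D^3$ is a tubular neighborhood of $\gamma$, so that $\partial C = S^1\times S^2$ and
\[ N = C\cup_{S^1\times S^2}(S^1\times D^3), \qquad N_\gamma = C\cup_{S^1\times S^2}(D^2\times S^2). \]
Since $\gamma$ has codimension $3$, the inclusion $C\hookrightarrow N$ induces an isomorphism on $\pi_1$ and on $H_1(-;\Q)$, and under it the class of $S^1\times\{\mathrm{pt}\}\subset\partial C$ goes to $[\gamma]$. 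I would first record that Euler characteristic is additive over these decompositions: since $\chi(S^1\times D^3) = \chi(S^1\times S^2) = 0$ and $\chi(D^2\times S^2) = 2$, this gives $\chi(C) = \chi(N)$ and $\chi(N_\gamma) = \chi(N) + 2$.

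Next I would run the rational Mayer--Vietoris sequence for $N_\gamma = C\cup(D^2\times S^2)$. Since $H_1(D^2\times S^2;\Q) = 0$ and $\widetilde H_0(S^1\times S^2;\Q) = 0$, it yields $H_1(N_\gamma;\Q)\cong H_1(N;\Q)/\Q\langle[\gamma]\rangle$ --- the familiar fact that surgery on $\gamma$ kills the class of $\gamma$. Hence $b_1(N_\gamma) = b_1(N) - 1$ when $[\gamma]$ is rationally essential and $b_1(N_\gamma) = b_1(N)$ when $[\gamma]$ is rationally nullhomologous. As $N$ and $N_\gamma$ are closed, connected, and oriented, Poincar\'e duality gives $b_3 = b_1$, so $\chi = 2 - 2b_1 + b_2$ for both. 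Plugging in $\chi(N_\gamma) = \chi(N) + 2$ together with the two values of $b_1(N_\gamma)$ then forces $b_2(N_\gamma) = b_2(N)$ in the essential case and $b_2(N_\gamma) = b_2(N) + 2$ in the nullhomologous case.

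For the last statement, assume $[\gamma]$ is rationally nullhomologous and take $S := \{0\}\times S^2\subset D^2\times S^2\subset N_\gamma$, the belt sphere of the surgery; its normal bundle is the trivial rank-$2$ bundle over $S^2$, so $[S]^2 = 0$. To see $[S]\neq 0$ in $H_2(N_\gamma;\Q)$ I would use the same Mayer--Vietoris sequence: the map $H_1(S^1\times S^2;\Q)\to H_1(C;\Q)$ sends the generator $[S^1\times\{\mathrm{pt}\}]$ to $[\gamma] = 0$, so the connecting map $\partial\colon H_2(N_\gamma;\Q)\to H_1(S^1\times S^2;\Q)\cong\Q$ is onto. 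Choosing $a\in H_2(N_\gamma;\Q)$ with $\partial a$ a generator, $a$ is represented by the union of a rational $2$-chain in $C$ bounding $S^1\times\{\mathrm{pt}\}$ with the meridian disk $D^2\times\{\mathrm{pt}\}\subset D^2\times S^2$; this representative meets $S$ transversally in the single point $\{0\}\times\{\mathrm{pt}\}$, so $a\cdot[S] = \pm1$ and therefore $[S]\neq 0$.

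The Betti-number count is routine; the one step I expect to require a little care is the intersection argument in the last paragraph --- arranging the Mayer--Vietoris class $a$ to be represented by a surface in the stated normal form so that its intersection with the belt sphere is visibly a single transverse point (equivalently, pairing $a$ against the Poincar\'e dual of $[S]$ and tracing it through the decomposition). The framing used to perform the surgery is irrelevant throughout: it changes neither $C$, nor the homeomorphism types of the glued pieces, nor the homology class $[S]$.
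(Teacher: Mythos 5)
Your proof is correct, and it fills in exactly the exercise the paper leaves to the reader: the paper only remarks that the proof is ``an exercise in homology calculations'' and identifies the square-zero sphere as the belt sphere of the associated $2$-handle cobordism, which is precisely the sphere $\{0\}\times S^2\subset D^2\times S^2$ you use. Your Mayer--Vietoris and Euler-characteristic computation, together with the meridian-disk intersection argument for the nonvanishing of $[S]$, is the standard way to carry out that exercise and matches the paper's indicated approach.
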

Note that there is a 5-dimensional 2-handle cobordism $(Z; N, N_\gamma)$. The claimed essential square zero $2$-sphere is the belt sphere of the cobordism $Z$.
The proof of the lemma is an exercise in homology calculations.

\begin{proof}[Proof of Proposition~\ref{prop:vanishing}]
The key input is that Fintushel-Stern proved a 4-manifold with $b^+ > 1$ and a rationally essential embedded square zero sphere has vanishing Seiberg-Witten invariants \cite[Lemma 5.1]{FSImmersed}.  We will establish the existence of such a sphere.  Suppose $X$ is built from one 0-handle, $g$ 1-handles, and $n$ 2-handles.  Then $\partial X$ is described by taking $\#_g (S^1 \times S^3)$ and surgering $n$ loops, $\gamma_1,\ldots, \gamma_n$, which are the attaching circles for the 2-handles.  If $V$ is the result of surgering $\gamma_1,\ldots, \gamma_k$, for some $k$, then $H_1(V;\mathbb{Q})$ is the quotient of $H_1(\#_g S^1 \times S^3; \mathbb{Q})$ by the subspace spanned by $\gamma_1,\ldots, \gamma_k$.  After re-ordering, there is a $k$ such that $\gamma_1,\ldots, \gamma_k$ are linearly independent in $H_1(\#_g S^1 \times S^3;\mathbb{Q})$ and their span agrees with that of $\gamma_1,\ldots,\gamma_n$.  After surgering $\gamma_1,\ldots,\gamma_k$, the images of $\gamma_{k+1},\ldots, \gamma_n$ are all rationally nullhomologous.  Lemma~\ref{lem:surgery} implies that surgery on the image curves, i.e. $\partial X$, has $b_2 = 2(n - k)$.  Because $b^+(\partial X) > 1$, it follows that $\gamma_{k+1}$ exists. 
The same lemma now gives that $\partial X$ contains an embedded square zero sphere which is rationally essential, contradicting the result of Fintushel-Stern.  
\end{proof}

\section{Spines in other dimensions: proof of Theorem \ref{thm: k not 2}}\label{sec:k not 2}

\begin{proof} For $k=1$, $K$ is a graph, so ${\pi}^{}_1(K)$ is a free group. Repeated applications of Dehn's lemma/loop theorem (using the fact that any map from ${\pi}^{}_1(\partial M)$ to a free group has kernel) shows that $M^3$ compresses to a (fake) $3$-cell. It is known that $M$ must be a handlebody, which evidently has $K$ as a spine. The difficult detail that $M$ cannot contain a fake $3$-cell and is thus a standard handlebody is due to Pereleman \cite{P1, P2}.

For $k\geq 3$, we rely on the $s$-cobordism theorem. By general position assume the simple homotopy equivalence $K\to M$ is an inclusion. Let $N:=N(K)\subset {\rm int}(M)$ be the regular neighborhood and $C:=\overline{M\smallsetminus N(K)}$ be the closed complement. By the Mayer-Vietoris sequence for $M=N\cup C$ and the fact that $N\hookrightarrow M$ is a homology isomorphism, conclude that $C$ is a homology product. In the case ${\pi}^{}_1(M)\neq \{ 1\}$, make this conclusion with ${\mathbb Z}[{\pi}^{}_1(M)]$ coefficients. 

Crucially, when $k\geq 2$ the codimension of $K$ in $M$ is $\geq 3$, allowing us to show that $C$ is an $h$-cobordism.
For notation $\partial^{}_0 C :=\partial N(K)$ and $\partial^{}_1 C :=\partial M$. ${\pi}^{}_1(\partial^{}_1 C)\stackrel{{\rm inc}_{\#}}{\longrightarrow}{\pi}^{}_1(M)$ must be onto, for if not there will be kernel in the map
\[
H_0(\partial M; {\mathbb Z}[\pi^{}_1 M])\to H_0(M; {\mathbb Z}[\pi^{}_1 M]).
\]
Furthermore, 
${\pi}^{}_1(\partial^{}_0 C)\stackrel{{\rm inc}_{\#}}{\longrightarrow} {\pi}^{}_1(N)\cong {\pi}^{}_1(M)$ is an injection, since any null-homotopy $h\colon (D^2, \partial)\to (N, \partial^{}_0 C)$ will be disjoint from $K$ by general position and then can be pushed back into $\partial^{}_0 C$ using the mapping cylinder structure on $(N(K), K)$.

But since $M$ collapses to $K$, it also collapses to $N(K)$. During the collapse the fundamental group of the frontier stays constant so ${\pi}^{}_1(\partial^{}_0 C)$ and ${\pi}^{}_1(\partial^{}_1 C)$ have identical images in ${\pi}^{}_1(M)$. It follows that all the inclusions below induce isomorphisms on ${\pi}^{}_1$
\[ 
\centering
\begin{tikzcd}
\pi^{}_1(\partial^{}_0 C) \arrow[dr, "\cong" {xshift=-1.5ex,yshift=.3ex}] & & &  & \\[-15pt]
& \pi^{}_1(C) \arrow[r, "\cong"] & \pi^{}_1(M) & \pi^{}_1(N) \arrow[swap]{l}{ \cong} & \pi^{}_1(K),\arrow[swap]{l}{ \cong}
\\[-15pt]
\pi^{}_1(\partial^{}_1 C) \arrow[ur, "\cong"{xshift=1ex,yshift=-2.5ex}] & & &  &
\end{tikzcd}
\]
making $C$ an $h$-cobordism. Finally, it follows from the additivity of the Whitehead torsion that
\[
0=\tau(M,K)=\tau(N,K)+\tau(M,N) 
\]
showing $\tau(M,N)=\tau(C,\partial^{}_0 C)=0$.
Thus $(C; C_0, C_1)$ is actually an $s$-cobordism.

When $k=2$ we are in too low a dimension, $2k+1=5$, to apply the PL $s$-cobordism theorem. However for $k\geq 3$ we conclude that $C$ is a PL product $\partial^{}_0 C\times [0,1]\cong C$, implying that $K\hookrightarrow M$ is a spine.
\end{proof}

The proof makes clear that the question of $2$-spines for $5$-manifolds is in the realm of low dimensional topology. If we may digress to philosophy for a moment, bounded $5$-manifolds are inherently ``low dimensional''. Here are two examples: the failure of a smooth  or PL $5$-dimensional $h$-cobordism theorem underlies the richness of smooth $4$-manifolds. Also, the existence of topological handlebody structures on bounded $5$-manifolds was only established in \cite{FQ} using the disk embedding theorem. But the low dimensional character of bounded $5$-manifolds is often overlooked: Kirby's problem list \cite{Kirby} references ``spine'' 30 times but always in relation to 3 or 4 dimensional manifolds.

{\em Acknowledgements.} We would like to thank İnanç Baykur, Mikhail Khovanov, Peter Kronheimer, and Lisa Piccirillo for helpful discussions.

VK was supported in part by NSF Grant DMS-2105467.  TL was supported in part by NSF Grant DMS-2105469.


\begin{thebibliography}{alpha}



\bibitem[BH23]{BH}
R.I. Baykur, N. Hamada, {\em Exotic $4$-manifolds with signature zero}, \href{https://arxiv.org/abs/2305.10908}{arXiv:2305.10908} 

\bibitem[CS77]{CS} S.E. Cappell and J. Shaneson,
{\em Totally spineless manifolds}, Illinois J. Math. 21 (1977), 231-239.

\bibitem[Co73]{Cohen} M.M. Cohen, 
A course in simple-homotopy theory.
Grad. Texts in Math., Vol. 10
Springer-Verlag, New York-Berlin, 1973.

\bibitem[FS95]{FSImmersed} R. Fintushel and R.J. Stern, 
{Immersed spheres in {$4$}-manifolds and the immersed {T}hom conjecture}, 
Turkish J. Math. (2) 19 (1995), 145--157. 

\bibitem[FS98]{FS} R. Fintushel and R.J. Stern, {\em Knots, links, and  4-manifolds},
Invent. Math. 134 (1998),  363-400.

\bibitem[FQ90]{FQ} M. Freedman and F. Quinn, {The Topology of
$4$-Manifolds}, Princeton Math. Series {\bf 39}, Princeton, NJ, (1990).

\bibitem[HM93]{HM} C. Hog-Angeloni and W. Metzler, {\em The Andrews-Curtis conjecture and its generalizations}, London
Math. Soc. Lecture Note Ser., 197 Cambridge University Press, Cambridge, 1993, 365-380.

\bibitem[Hu66]{Hudson} J.F.P. Hudson, 
{\em Extending piecewise-linear isotopies}, Proc. London Math. Soc. (3) 16 (1966), 651-668.

\bibitem [Ki97]{Kirby} Rob Kirby (ed.), {\em Problems in low-dimensional topology}, AMS/IP Stud. Adv. Math., vol. 2, Amer. Math. Soc., Providence, RI, 1997, pp. 35-473.

\bibitem[KKN23]{KK} M. Khovanov, V. Krushkal and  J. Nicholson, {\em On the universal pairing for 2-complexes}, \href{https://arxiv.org/abs/2312.07429}{arXiv:2312.07429}

\bibitem[LL19]{LL} A. Levine and T. Lidman, {\em 
Simply connected, spineless 4-manifolds}, Forum Math. Sigma 7 (2019), Paper No. e14, 11 pp.

\bibitem[LLP23]{LLP} A. Levine, T. Lidman and L. Piccirillo, {\em New constructions and invariants of closed exotic 4-manifolds}, 
arXiv:2307.08130.


\bibitem[Ma75]{Matsumoto} Y. Matsumoto,
{\em A 4-manifold which admits no spine}, Bull. Amer. Math. Soc. 81 (1975), 467-470.

\bibitem[Pa02]{Park} J. Park, {\em The geography of Spin symplectic 4-manifolds}, Math. Z. 240 (2002), 405-421.

\bibitem[Pe02]{P1}
G. Perelman, {\em The Entropy Formula for the Ricci Flow and Its Geometric Application},  \href{http://arxiv.org/abs/math.DG/0211159}{arXiv:math/0211159}

\bibitem[Pe03]{P2}
G. Perelman, {\em Ricci Flow with Surgery on Three-Manifolds}, \href{http://arxiv.org/abs/math.DG/0303109}{arXiv:math/0303109}

\bibitem[Q85]{Quinn} F. Quinn,  {\em Handlebodies and  2-complexes}, 
Lecture Notes in Math., 1167
Springer-Verlag, Berlin, 1985, 245-259.

\bibitem[Wa64]{Wall} C.T.C. Wall, 
{\em On simply-connected $4$-manifolds}, J. London Math. Soc. 39 (1964), 141-149.


\end{thebibliography}
\end{document}